\documentclass[11pt]{article}
\usepackage{amsfonts}
\usepackage{latexsym}
\usepackage{cite}
\usepackage{amsmath,amsfonts,latexsym,amssymb}
\usepackage[mathscr]{eucal}
\usepackage{cases}
\usepackage{amsthm}

\usepackage[bf,small]{caption2}
\usepackage{float,color}
\usepackage{graphicx}
\usepackage{amsmath,url}
\usepackage{amssymb}
\usepackage[all]{xy}

\newtheorem{theorem}{theorem}[section]
\newtheorem{thm}[theorem]{Theorem}
\newtheorem{lem}[theorem]{Lemma}

\newtheorem{cor}[theorem]{Corollary}

\newtheorem{nota}[theorem]{Notation}

\newtheorem{rmk}[theorem]{Remark}

\begin{document}

\title{\textbf{Character varieties of even classical pretzel knots}}
\author{\Large Haimiao Chen
\footnote{Email: {\em\small chenhm@math.pku.edu.cn}} \\
\normalsize \em{Mathematics, Beijing Technology and Business University, Beijing, China}}

\date{}
\maketitle

\begin{abstract}
  For each even classical pretzel knot $P(2k_1+1,2k_2+1,2k_3)$, we determine the character variety of irreducible ${\rm SL}(2,\mathbb{C})$-representations, and clarify the steps of computing its A-polynomial.

  \medskip
  \noindent {\bf Keywords:} ${\rm SL}(2,\mathbb{C})$-representation; character variety; even classical pretzel knot; A-polynomial \\
  {\bf MSC 2010:} 57M25, 57M27
\end{abstract}

\section{Introduction}

For a knot $K\subset S^{3}$, let $E_{K}=S^3-N(K)$, with $N(K)$ a tubular neighborhood.
The ${\rm SL}(2,\mathbb{C})$-{\it representation variety} of $K$ is the set $\mathcal{R}(K)$ consisting of representations $\rho:\pi_1(E_{K})\to{\rm SL}(2,\mathbb{C})$, and the {\it character variety} of $K$ is
$\mathcal{X}(K)=\{\chi(\rho)\colon\rho\in\mathcal{R}(K)\},$
where the {\it character} of $\rho$ is the function $\chi(\rho):\pi_1(E_{K})\to\mathbb{C}$ sending $x\in\pi_1(E_{K})$ to ${\rm tr}(\rho(x))$.

The reason for calling $\mathcal{R}(K)$ and $\mathcal{X}(K)$ varieties is that they can be defined by a finite set of polynomial equations \cite{CS83}.
Denote the subset of $\mathcal{R}(K)$ consisting of irreducible representations by $\mathcal{R}^{\rm irr}(K)$, then up to conjugacy, each $\rho\in\mathcal{R}^{\rm irr}(K)$ is determined by $\chi(\rho)$.
We mainly focus on $\mathcal{R}^{\rm irr}(K)$ and
$$\mathcal{X}^{\rm irr}(K):=\{\chi(\rho)\colon\rho\in\mathcal{R}^{\rm irr}(K)\};$$
reducible representations and their characters are easy to understand.

As seen in the literature, there seems to be difficulty in dealing with the representation/character variety of $K$ when $\pi_1(E_K)$ is generated by at least three generators. Although an effective algorithm for finding the character variety of any finitely presented group has been developed in \cite{ABL18}, systematically computing for a family at a time is another story. In \cite{Ch18-2}, the author computed the character varieties for classical odd pretzel knots, which form a 3-parameter family.

In this paper, we contribute one more piece, by determining the irreducible character variety for each even classical pretzel knot:
\begin{thm} \label{thm:main}
The irreducible character variety of the even pretzel knot $P(2k_1+1,2k_1+1,2k_3)$ can be embedded in
$$\{(t,s_1,s_2,s_3,\tau)\in\mathbb{C}^5\colon \tau^2-t(\sigma_1+2)\tau+t^2(\sigma_2+4)=4+\sigma_3+2\sigma_2-\sigma_1^2\},$$
and is the disjoint union of four parts:
$\mathcal{X}^{\rm irr}(K)=\mathcal{X}_0\sqcup\mathcal{X}_1\sqcup\mathcal{X}_2\sqcup\mathcal{X}_3,$
where
\begin{itemize}
  \item $\mathcal{X}_0=\mathcal{X}_{0,1}\sqcup\mathcal{X}_{0,2}$, where $\mathcal{X}_{0,1}$ consists of $(0,s_1,s_2,s_3,\tau)$ with
        $$\tau\ne 0,\qquad \gamma_1=-\beta_1, \qquad \gamma_2=-\beta_2, \qquad \beta_3=0,$$
        and $\mathcal{X}_{0,2}$ consists of $(0,2\cos\theta_1,2\cos\theta_2,2\cos\theta_3,0)$ with
        $$\cos(2k_1+1)\theta_1=\cos(2k_2+1)\theta_2=\cos(2k_3\theta_3)\ne -1;$$
  \item $\mathcal{X}_1=\mathcal{X}_{1,1}\sqcup\mathcal{X}_{1,2}\sqcup\mathcal{X}_{1,3}$, where $\mathcal{X}_{1,3}$ consists of $(\pm 2,s_1,s_2,2,\tau)$ with $\gamma_{1}=\beta_{1}$, $\gamma_2=\beta_2$,
        and for $j=1,2$, $\mathcal{X}_{1,j}$ consists of $(t,s_1,s_2,s_3,\tau)$ with
        $$\gamma_{j\pm}=\beta_{j\pm}, \qquad  t^2=s_{j}+2=s_{j+}+s_{j-};$$
  \item $\mathcal{X}_2$ consists of
        $$\Big(t,2\cos\frac{(2h_1+1)\pi}{2k_1+1},2\cos\frac{(2h_2+1)\pi}{2k_2+1},2\cos\frac{h_3\pi}{k_3},\tau\Big)$$
        with $h_1\in\{0,\ldots,k_1\}$, $h_2\in\{0,\ldots,k_2\}$, $h_3\in\{0,\ldots,k_3-1\},$
        so $\mathcal{X}_2$ is made up of $(k_1+1)(k_2+1)k_3$ conics;
  \item $\mathcal{X}_3$ consists of $(t,s_1,s_2,s_3,t\lambda)$ with
        \begin{align*}
        \sigma_1+2-2\lambda&\ne 0, \qquad t\ne 0, \\
        (\lambda-2-s_j)\gamma_j&=(\sigma_1-s_j-\lambda)\beta_j, \qquad j=1,2,  \\
        (\sigma_1+2-2\lambda)\alpha_3&=(s_3^2-s_3\lambda+\sigma_1-2)\beta_3.
        \end{align*}
\end{itemize}
The dimensions are: $\dim\mathcal{X}_{0}=\dim\mathcal{X}_{1}=0$, $\dim\mathcal{X}_2=\dim\mathcal{X}_3=1$.
\end{thm}
The proof is given in Section 3. Based on this, in Section 4 we also present a method for computing the A-polynomial.
The main line is parallel to that of \cite{Ch18-2}, but now we simplify some key steps, and fix a few newly arising issues.

\medskip

{\bf Acknowledgement} \\
The author is supported by NSFC-11771042.

\section{Preliminary}

For most part of this section, refer to \cite{Ch18-2} Section 2.

Let $\mathcal{M}(2,\mathbb{C})$ denote set of $2\times 2$ matrices with entries in $\mathbb{C}$; it is a 4-dimensional vector space over $\mathbb{C}$.
Let $\mathcal{U}(2,\mathbb{C})\subset\mathcal{M}(2,\mathbb{C})$ denote the subspace of upper-triangular matrices, and let $\mathcal{UT}=\mathcal{U}(2,\mathbb{C})\cap{\rm SL}(2,\mathbb{C})$.
Let $I$ denote the $2\times 2$ identity matrix.

Given $t\in \mathbb{C}$ and $k\in\mathbb{Z}$, take $a$ with $a+a^{-1}=t$ and put
\begin{align*}
\omega_{k}(t)=\begin{cases}
(a^{k}-a^{-k})/(a-a^{-1}), &a\notin\{\pm 1\}, \\
ka^{k-1}, &a\in\{\pm 1\};
\end{cases}
\end{align*}
note that the right-hand-side is unchanged when $a$ is replaced by $a^{-1}$. It is easy to verify that for all $k\in\mathbb{Z}$,
\begin{align}
\omega_k(t)+\omega_{-k}(t)&=0, \label{eq:omega0} \\
\omega_{k+1}(t)-t\omega_{k}(t)+\omega_{k-1}(t)&=0, \label{eq:omega1} \\
\omega_{k}(t)^{2}-t\omega_{k}(t)\omega_{k-1}(t)+\omega_{k-1}(t)^{2}&=1.  \label{eq:omega2}
\end{align}

If $X\in{\rm SL}(2,\mathbb{C})$ with ${\rm tr}(X)=t$, then repeated applications of Cayley-Hamilton Theorem leads to
\begin{align}
X^{k}=\omega_{k}(t)X-\omega_{k-1}(t)I \label{eq:X^k}
\end{align}
for all $k\in\mathbb{Z}$; in particular,
\begin{align}
X^{-1}=tI-X. \label{eq:X-inverse}
\end{align}

\begin{lem}  \label{lem:basic}
For any $X,Y\in{\rm SL}(2,\mathbb{C})$ with ${\rm tr}(X)=t_{1}$, ${\rm tr}(Y)=t_{2}$ and ${\rm tr}(XY)=t_{12}$, one has
\begin{align}
XYX&=t_{12}X-Y^{-1}, \label{eq:basic-1}  \\
XY+YX&=(t_{12}-t_{1}t_{2})I+t_{2}X+t_{1}Y.   \label{eq:basic-2}
\end{align}
\end{lem}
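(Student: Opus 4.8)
The plan is to derive both identities purely from the Cayley--Hamilton relation \eqref{eq:X^k} and the inversion formula \eqref{eq:X-inverse}, so that the whole lemma reduces to bookkeeping in the matrix algebra already set up. I would treat \eqref{eq:basic-1} first, since \eqref{eq:basic-2} follows cleanly from it.

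For \eqref{eq:basic-1} the key observation is that $XY\in{\rm SL}(2,\mathbb{C})$ has trace $t_{12}$. Applying \eqref{eq:X^k} to $XY$ with $k=2$, and using $\omega_1(t_{12})=1$, $\omega_2(t_{12})=t_{12}$ (both immediate from the definition of $\omega_k$), gives $(XY)^2=t_{12}(XY)-I$, that is $XYXY=t_{12}XY-I$. Right-multiplying both sides by $Y^{-1}$ yields $XYX=t_{12}X-Y^{-1}$, which is exactly \eqref{eq:basic-1}.

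For \eqref{eq:basic-2} I see two routes. The first stays inside the present formalism: substitute $Y^{-1}=t_2I-Y$ from \eqref{eq:X-inverse} into \eqref{eq:basic-1} to get $XYX=t_{12}X-t_2I+Y$, then right-multiply by $X^{-1}=t_1I-X$ and reduce the resulting $X^2$ term via $X^2=t_1X-I$ (again \eqref{eq:X^k} with $k=2$). The two $t_1t_{12}X$ contributions cancel, and one is left with $XY=(t_{12}-t_1t_2)I+t_2X+t_1Y-YX$, which rearranges to \eqref{eq:basic-2}. The second, more conceptual route is to recognize \eqref{eq:basic-2} as the polarization of Cayley--Hamilton: replacing $M$ by $X+Y$ in $M^2-{\rm tr}(M)M+\det(M)I=0$, subtracting the identities for $X$ and $Y$ separately, and using the bilinear expansion $\det(X+Y)=\det X+\det Y+\bigl({\rm tr}(X){\rm tr}(Y)-{\rm tr}(XY)\bigr)$ valid for $2\times 2$ matrices, one obtains $XY+YX={\rm tr}(Y)X+{\rm tr}(X)Y-\bigl({\rm tr}(X){\rm tr}(Y)-{\rm tr}(XY)\bigr)I$ at once.

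There is no serious obstacle here; the content is elementary once the correct normalizations $\omega_1=1$, $\omega_2=t$ are in hand. The only point requiring care in the first route is performing the $X^2=t_1X-I$ substitution at the right moment so that the $X$-coefficients collapse, and in the second route the only preliminary is the determinant polarization identity for $2\times 2$ matrices, after which \eqref{eq:basic-2} is immediate. I would present the first route, as it keeps everything within the toolkit of \eqref{eq:X^k}--\eqref{eq:X-inverse} and avoids invoking any outside computation.
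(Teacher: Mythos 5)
Your proposal is correct: the Cayley--Hamilton relation $(XY)^2=t_{12}XY-I$ followed by right-multiplication by $Y^{-1}$ gives \eqref{eq:basic-1}, and both of your routes to \eqref{eq:basic-2} (substituting \eqref{eq:X-inverse} and multiplying by $X^{-1}=t_1I-X$, or polarizing Cayley--Hamilton via $\det(X+Y)=\det X+\det Y+t_1t_2-t_{12}$) check out. The paper itself states this lemma without proof, deferring to standard facts and to Section 2 of \cite{Ch18-2}, and your argument is exactly the intended elementary derivation within the \eqref{eq:X^k}--\eqref{eq:X-inverse} toolkit.
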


We call a set $\{X_1,\ldots,X_r\}$ {\it regular} if $X_1,\ldots,X_r$ do not have a common eigenvector.
The following results are well-known; one can refer to \cite{Go09} Section 2 and Section 5.
\begin{lem} \label{lem:non-regular}
For any $X,Y\in{\rm SL}(2,\mathbb{C})-\{\pm I\}$, the following conditions are equivalent to each other:
\begin{enumerate}
  \item[\rm(i)] $\{X,Y\}$ is not regular;
  \item[\rm(ii)] there exists $Z\in{\rm SL}(2,\mathbb{C})$ such that $ZXZ^{-1},ZYZ^{-1}\in\mathcal{UT}$;
  \item[\rm(iii)] $I,X,Y,XY$ are linear dependent as elements of $\mathcal{M}(2,\mathbb{C})$.
\end{enumerate}
\end{lem}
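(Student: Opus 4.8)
The three conditions all say, in different languages, that $X$ and $Y$ are simultaneously reducible, so my plan is to dispatch (i)$\Leftrightarrow$(ii) and (ii)$\Rightarrow$(iii) by elementary linear algebra and to concentrate on (iii)$\Rightarrow$(i), which is the only implication with real content. For (i)$\Leftrightarrow$(ii): ``not regular'' means $X,Y$ share an eigenvector $v$; I would choose $Z\in{\rm SL}(2,\mathbb{C})$ carrying the line $\mathbb{C}v$ to $\mathbb{C}e_1$ (any such $Z$ can be rescaled to determinant $1$), whereupon $ZXZ^{-1},ZYZ^{-1}$ fix $\mathbb{C}e_1$ and hence lie in $\mathcal{UT}$. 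Conversely every element of $\mathcal{UT}$ fixes $\mathbb{C}e_1$, so a common conjugate into $\mathcal{UT}$ returns the common eigenvector $Z^{-1}e_1$. For (ii)$\Rightarrow$(iii): conjugation by a fixed $Z$ is a linear automorphism of $\mathcal{M}(2,\mathbb{C})$ and therefore preserves linear (in)dependence, so I may assume $X,Y\in\mathcal{UT}$; then $I,X,Y,XY$ all lie in the $3$-dimensional subspace $\mathcal{U}(2,\mathbb{C})$, and four vectors in a $3$-dimensional space are dependent.

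The heart of the matter is (iii)$\Rightarrow$(i). Let $A\subseteq\mathcal{M}(2,\mathbb{C})$ be the unital subalgebra generated by $X$ and $Y$. Using \eqref{eq:X^k} (Cayley--Hamilton, so $X^{2}=t_{1}X-I$ and $Y^{2}=t_{2}Y-I$) together with \eqref{eq:basic-1}, \eqref{eq:basic-2} and \eqref{eq:X-inverse}, I would verify that every pairwise product of $I,X,Y,XY$ again lies in ${\rm span}\{I,X,Y,XY\}$: for instance $YX$ is rewritten through \eqref{eq:basic-2}, while $XYX=t_{12}X-Y^{-1}$ and $YXY=t_{12}Y-X^{-1}$ reduce by \eqref{eq:basic-1} and \eqref{eq:X-inverse}, and $(XY)^{2}=t_{12}XY-I$. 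Hence ${\rm span}\{I,X,Y,XY\}$ is closed under multiplication, so it equals $A$. Under hypothesis (iii) this forces $\dim A\le 3$, whence $A\neq\mathcal{M}(2,\mathbb{C})$. By Burnside's theorem a proper unital subalgebra of $\mathcal{M}(2,\mathbb{C})$ cannot act irreducibly on $\mathbb{C}^{2}$, so $A$ preserves a line $\ell\subset\mathbb{C}^{2}$; since $X,Y\in A$, this $\ell$ is a common eigenvector and $\{X,Y\}$ is not regular.

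The step I expect to be the main obstacle is exactly the irreducibility input, i.e.\ invoking Burnside's theorem (equivalently Jacobson density for $\mathcal{M}(2,\mathbb{C})$); as the paper remarks, this is standard and one may simply cite \cite{Go09}. If one prefers to stay fully elementary, I would instead argue directly from a nontrivial relation $\alpha I+\beta X+\gamma Y+\delta XY=0$. When $\delta=0$, the hypotheses $X,Y\neq\pm I$ force one of $X,Y$ to be a linear combination of $I$ and the other, hence a polynomial in it, so their eigenvectors coincide. When $\delta\neq0$ I would normalize $\delta=1$ and test the relation (and its $YX$-counterpart obtained from \eqref{eq:basic-2}) on the eigenvectors of $Y$ and of $X$; an eigenvector fails to be common only in a narrowly constrained parabolic situation, where a short explicit computation of the $4\times4$ coordinate matrix of $I,X,Y,XY$ gives determinant $-a^{2}d^{2}\neq0$, contradicting (iii). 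This elementary route is the routine-but-fiddly part, which is precisely why the algebra-plus-Burnside argument above is my preferred line.
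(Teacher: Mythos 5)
Your proof is correct, but there is no line-by-line comparison to make, because the paper does not actually prove Lemma \ref{lem:non-regular}: it labels the equivalences ``somewhat well-known'' and defers entirely to \cite{Go09}, Sections 2 and 5. Your argument is therefore a self-contained replacement for that citation, and it holds up. The implications (i)$\Leftrightarrow$(ii) and (ii)$\Rightarrow$(iii) are the routine linear algebra one expects (common eigenvector $\leftrightarrow$ common invariant line $\leftrightarrow$ conjugation into $\mathcal{UT}$, and four vectors in the $3$-dimensional space $\mathcal{U}(2,\mathbb{C})$ must be dependent). For the substantive implication (iii)$\Rightarrow$(i), your key observation is that ${\rm span}\{I,X,Y,XY\}$ is closed under multiplication --- every product $X^2$, $Y^2$, $YX$, $X(XY)$, $(XY)X$, $Y(XY)$, $(XY)Y$, $(XY)^2$ reduces into the span via \eqref{eq:X^k}, \eqref{eq:X-inverse}, \eqref{eq:basic-1}, \eqref{eq:basic-2} --- so this span equals the unital algebra generated by $X$ and $Y$; condition (iii) then makes that algebra proper in $\mathcal{M}(2,\mathbb{C})$, and Burnside's theorem (valid over $\mathbb{C}$) yields a common invariant line, i.e.\ a common eigenvector. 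This is clean, and it has the pleasant feature of using exactly the identities the paper has already displayed in Section 2; it even shows the hypothesis $X,Y\ne\pm I$ is not needed for this implication (when $X$ or $Y$ is $\pm I$ all three conditions hold trivially). What the paper's citation buys is brevity and avoidance of any appeal to Burnside; what your route buys is self-containedness. One caveat: your fallback ``fully elementary'' treatment of the case $\delta\ne 0$ is only a sketch --- the claimed determinant $-a^2d^2\ne 0$ is asserted without the supporting computation or even a statement of what $a$ and $d$ are --- so if you wanted to avoid Burnside you would still need to flesh that part out; as written, the algebra-plus-Burnside version is the proof, and it is complete.
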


\begin{lem} \label{lem:3 matrix}
Given $t, t_{12}, t_{23}, t_{13}, t_{123}\in\mathbb{C}$, let
\begin{align*}
\nu_0&=t^2(3-t_{13}-t_{23}-t_{13})+t_{12}^2+t_{23}^2+t_{13}^2+t_{12}t_{23}t_{13}-4, \\
\nu_1&=t(t_{12}+t_{23}+t_{13})-t^3.
\end{align*}

{\rm(i)} There exist $X_1, X_2, X_3\in{\rm SL}(2,\mathbb{C})$ with ${\rm tr}(X_i)=t$, ${\rm tr}(X_{i}X_{j})=t_{ij}$ for $1\le i<j\le 3$ and ${\rm tr}(X_1X_2X_3)=t_{123}$ if and only if
\begin{align}
t_{123}^2-\nu_1t_{123}+\nu_0=0.   \label{eq:r-gamma}
\end{align}

{\rm(ii)} If {\rm(\ref{eq:r-gamma})} holds and $\{X_1,X_2,X_3\}$ is required to be regular, then the ordered triple $(X_1,X_2,X_3)$ is unique up to simultaneous conjugacy.
\end{lem}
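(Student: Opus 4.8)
My plan is to prove (i) and (ii) together, organizing everything around the non-degenerate symmetric trace form $\langle A,B\rangle={\rm tr}(AB)$ on $\mathcal{M}(2,\mathbb{C})$ and the characterization of regularity from Lemma~\ref{lem:non-regular}. The first step is the \emph{necessity} in (i), which I would obtain from two trace identities. Given any $X_1,X_2,X_3\in{\rm SL}(2,\mathbb{C})$ realizing the prescribed traces, set $x={\rm tr}(X_1X_2X_3)$ and $x'={\rm tr}(X_1X_3X_2)$. Multiplying (\ref{eq:basic-2}), applied to the pair $X_2,X_3$, on the left by $X_1$ and taking traces gives, after putting all three single traces equal to $t$, the sum relation $x+x'=t(t_{12}+t_{13}+t_{23})-t^3=\nu_1$. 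A longer manipulation — repeatedly reducing words of length $\ge 3$ using (\ref{eq:basic-1}), (\ref{eq:X-inverse}) and ${\rm tr}(AB)+{\rm tr}(AB^{-1})={\rm tr}(A){\rm tr}(B)$ — yields the product relation $xx'=\nu_0$; this is the classical Fricke identity (cf.\ \cite{Go09}). Hence $x$ and $x'$ are precisely the two roots of (\ref{eq:r-gamma}), so in particular $t_{123}=x$ satisfies (\ref{eq:r-gamma}), and the two roots acquire the concrete meaning of the two cyclic orderings.

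For the \emph{sufficiency} in (i) I would build the matrices. First realize a pair $X_1,X_2$ with ${\rm tr}(X_1)={\rm tr}(X_2)=t$ and ${\rm tr}(X_1X_2)=t_{12}$ by an explicit upper/lower-triangular model. Since ${\rm tr}([X_1,X_2])=2t^2+t_{12}^2-t^2t_{12}-2$ is determined by these three traces, the pair is regular exactly when $2t^2+t_{12}^2-t^2t_{12}-4\neq0$ (and symmetrically for the other pairs). In the generic case where some pair, say $\{X_1,X_2\}$, is regular, Lemma~\ref{lem:non-regular} tells us $\{I,X_1,X_2,X_1X_2\}$ is a basis of $\mathcal{M}(2,\mathbb{C})$. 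By cyclicity the four pairings ${\rm tr}(X_3\cdot I)=t$, ${\rm tr}(X_3X_1)=t_{13}$, ${\rm tr}(X_3X_2)=t_{23}$, ${\rm tr}(X_3X_1X_2)=t_{123}$ prescribe the functional $A\mapsto{\rm tr}(X_3A)$ on this basis, so non-degeneracy of the trace form produces a unique $X_3\in\mathcal{M}(2,\mathbb{C})$ with these traces. By construction ${\rm tr}(X_3)=t$, and since $\det X_3=(t^2-{\rm tr}(X_3^2))/2$ by Cayley--Hamilton, the requirement $X_3\in{\rm SL}(2,\mathbb{C})$ amounts to a single quadratic condition in $t_{123}$. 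Expanding $X_3$ in the basis and reducing with Lemma~\ref{lem:basic}, this quadratic shares both its roots with (\ref{eq:r-gamma}) (by the necessity part, applied to the two triples produced by the two roots) and has the same leading term in $t_{123}$, so the two quadratics coincide; thus $X_3\in{\rm SL}(2,\mathbb{C})$ iff (\ref{eq:r-gamma}) holds. The residual case in which all three pairs are forced reducible (so every $t_{ij}$ is a root of $u^2-t^2u+2t^2-4$) I would settle by an explicit simultaneously-triangularizable construction, or simply by invoking the Fricke--Vogt surjectivity onto the hypersurface (\ref{eq:r-gamma}) from \cite{Go09}.

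For the \emph{uniqueness} in (ii) I would deliberately avoid trying to reduce to a regular pair, since a triple can be regular while all three of its pairs share (pairwise distinct) eigenvectors. Instead I use the clean abstract chain: a set $\{X_1,X_2,X_3\}$ is regular iff the associated representation of the free group $F_3$ is irreducible (no common eigenline); the $\rm SL(2,\mathbb{C})$ character ring of $F_3$ is generated by the seven traces of $X_1,X_2,X_3,X_1X_2,X_1X_3,X_2X_3,X_1X_2X_3$ (classical, cf.\ \cite{Go09}), so two representations with equal values of these seven traces have the same character; and an irreducible representation is determined up to conjugacy by its character (the fact recalled in the Introduction). Hence any two regular triples realizing the given data $t,t,t,t_{12},t_{13},t_{23},t_{123}$ are simultaneously conjugate, which is exactly the ordered uniqueness claimed.

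The main obstacle is the product identity $xx'=\nu_0$, equivalently the computation that $\det X_3=1$ is governed by (\ref{eq:r-gamma}): this is the one genuinely nontrivial algebraic step, requiring disciplined word-reduction via Lemma~\ref{lem:basic}, whereas the rest is linear algebra over the trace form together with the abstract rigidity of irreducible characters. A secondary point demanding care is the degenerate locus where all pairs are reducible yet the triple may still be regular; there the "extend a regular pair" construction fails and must be replaced either by a direct triangular realization or by the cited surjectivity, while uniqueness there is nonetheless covered uniformly by the character argument.
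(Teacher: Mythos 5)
The paper never actually proves Lemma~\ref{lem:3 matrix}: it is introduced as ``somewhat well-known'' and deferred wholesale to \cite{Go09}, Sections 2 and 5. So your proposal is not competing with an internal argument; it is a reconstruction of the classical Fricke--Vogt--Goldman proof that the citation points to, and in outline it is exactly that proof: the sum identity $x+x'=\nu_1$ (your derivation from (\ref{eq:basic-2}) is correct), the product identity $xx'=\nu_0$ (the classical Fricke identity --- note that your version silently corrects a typo in the paper's $\nu_0$, where $t_{13}$ appears twice in the factor $3-t_{13}-t_{23}-t_{13}$ in place of $t_{12}$), extension of a regular pair by duality with respect to the trace form, and, for (ii), the chain ``regular $=$ irreducible, seven traces generate the $F_3$ character ring, irreducible characters are conjugacy-rigid.'' Part (ii) and the necessity half of (i) are sound as sketched, and leaning on the two classical inputs is no worse than what the paper itself does.

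There are, however, two concrete defects in your sufficiency argument. First, write $X_3=X_3^0+t_{123}V$, where $V$ is the trace-form dual of $X_1X_2$ in the basis $\{I,X_1,X_2,X_1X_2\}$. The condition $\det X_3=1$ is then a quadratic in $t_{123}$ with leading coefficient $\det V$, and a short computation with $X_1=\left(\begin{smallmatrix} u & 1 \\ 0 & u^{-1}\end{smallmatrix}\right)$, $X_2=\left(\begin{smallmatrix} u & 0 \\ y & u^{-1}\end{smallmatrix}\right)$ gives $\det V=\big(2-{\rm tr}([X_1,X_2])\big)^{-1}=-\big((t_{12}-2)(t_{12}+2-t^2)\big)^{-1}$, which is not $1$. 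So your claim that the two quadratics ``have the same leading term'' and therefore coincide is false; what is true is that they are proportional. Proportionality does follow from your root-sharing argument, but only when $\det X_3-1$ has two \emph{distinct} roots: in the double-root case you learn only that this single root satisfies (\ref{eq:r-gamma}), leaving open whether the second root of (\ref{eq:r-gamma}) is realized at all. This is repairable --- for instance, establish the proportionality $\det X_3-1=\det V\cdot\big(t_{123}^2-\nu_1t_{123}+\nu_0\big)$ as an identity of rational functions in $(t,t_{12},t_{13},t_{23},t_{123})$ by checking it on the Zariski-dense locus where the roots are distinct, or by direct expansion --- but as written the step fails. Second, in the residual case where every $t_{ij}\in\{2,t^2-2\}$, your first fallback (an explicit simultaneously triangularizable realization) cannot work in general: if $t\neq\pm2$ and $X_1,X_2,X_3$ are simultaneously upper-triangular with diagonals $(a_i,a_i^{-1})$, $a_i\in\{u,u^{-1}\}$, then ${\rm tr}(X_iX_j)=2$ forces $a_j=a_i^{-1}$, which by pigeonhole cannot hold for all three pairs; so data such as $t_{12}=t_{13}=t_{23}=2$, $t\neq\pm2$ are realizable only by non-triangularizable (pairwise-reducible but globally regular) configurations, and there your argument genuinely reduces to the citation of \cite{Go09} rather than to a construction.
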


\section{Irreducible representations}

In this section, we prove Theorem \ref{thm:main} through several lemmas and a corollary.

\begin{figure} [h]
  \centering
  \includegraphics[width=0.5\textwidth]{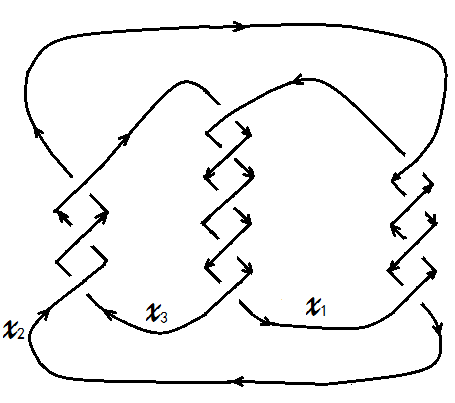}\\
  \caption{The pretzel knot $P(2k_1+1,2k_2+1,2k_3)$, with $k_1=1, k_2=k_3=2$}\label{fig:pretzel-even}
\end{figure}

Fix $K=P(2k_1+1,2k_2+1,2k_3)$ throughout this section.
Let $x_1,x_2,x_3\in\pi_1(E_K)$ be the elements represented by the arcs shown in Figure \ref{fig:pretzel-even}.

Similarly as in \cite{Ch18-2} Section 3.1, we can show: given $X_1,X_2,X_3\in{\rm SL}(2,\mathbb{C})$, there exists a representation $\rho:\pi_1(E_K)\to{\rm SL}(2,\mathbb{C})$ with
\begin{align*}
X_1=\rho(x_1), \qquad X_2=\rho(x_2), \qquad X_3=\rho(x_3^{-1})
\end{align*}
if and only if
\begin{align}
(X_2X_3^{-1})^{k_1}X_2(X_2X_3^{-1})^{-k_1}&=(X_3X_1^{-1})^{k_2+1}X_1(X_3X_1^{-1})^{-(k_2+1)},  \label{eq:relation1} \\
(X_3X_1^{-1})^{k_2}X_3(X_3X_1^{-1})^{-k_2}&=(X_1X_2^{-1})^{k_3}X_1^{-1}(X_1X_2^{-1})^{-k_3}, \label{eq:relation2} \\
(X_1X_2^{-1})^{k_3}X_2(X_1X_2^{-1})^{-k_3}&=(X_2X_3^{-1})^{k_1+1}X_3^{-1}(X_2X_3^{-1})^{-(k_1+1)}.  \label{eq:relation3}
\end{align}

\begin{nota}
\rm To simplify the writing, by $X_{j+}$ we mean $X_{j+1}$ for $j\in\{1,2\}$ and $X_1$ for $j=3$; by $X_{j-}$ we mean $X_{j-1}$ for $j\in\{2,3\}$ and $X_3$ for $j=1$. Similarly for other situations.
\end{nota}

Put
\begin{align*}
Y_j&=X_{j+}X_{j-}^{-1}, \qquad j=1,2,3, \\
A_j&=Y_j^{k_j}X_{j+}Y_j^{-k_j}X_{j-}, \qquad j=1,2, \\
A_3&=Y_3^{k_3}X_1^{-1}Y_3^{-k_3}X_1=Y_3^{k_3}X_2^{-1}Y_3^{-k_3}X_2.
\end{align*}
Then (\ref{eq:relation1})--(\ref{eq:relation3}) are equivalent to
\begin{align}
A_1=A_2=A_3. \label{eq:A}
\end{align}

We only consider irreducible representations, so $\{X_1,X_2,X_3\}$ is assumed to be regular.

Suppose
\begin{align*}
{\rm tr}(X_{1}X_{2}X_{3})&=r; \qquad {\rm tr}(X_{1})={\rm tr}(X_{2})={\rm tr}(X_{3})=t=u+u^{-1}, \\
&{\rm tr}(Y_{j})=s_j=v_j+v_j^{-1}, \qquad  j=1,2,3.
\end{align*}
Clearly,
\begin{align*}
{\rm tr}(X_{j+}X_{j-})={\rm tr}(X_{j+}(tI-X_{j-}^{-1}))=t^2-s_j.
\end{align*}

Let
\begin{align*}
\sigma_1=s_1+s_2+s_3,  \qquad \sigma_2&=s_1s_2+s_2s_3+s_3s_1,  \qquad \sigma_3=s_1s_2s_3, \\
\tau&=t^3+t-r, \\
\delta&=4+\sigma_3+2\sigma_2-\sigma_1^2, \\
\kappa&=\tau^2-t(\sigma_1+2)\tau+t^2(\sigma_2+4).
\end{align*}
Then (\ref{eq:r-gamma}) can be rewritten as
\begin{align}
\kappa=\delta. \label{eq:kappa=delta}
\end{align}

For each $j$, denote
\begin{align*}
\alpha_j=\omega_{k_j-1}(s_j), \qquad \beta_{j}=\omega_{k_j}(s_{j}), \qquad \gamma_{j}=\omega_{k_{j}+1}(s_{j}).
\end{align*}
Using (\ref{eq:X^k}) and (\ref{eq:basic-1}), we compute
\begin{align*}
Y_{j}^{k_{j}}X_{j+}&=(\beta_jY_j-\alpha_jI)X_{j+}=\beta_jX_{j+}X_{j-}^{-1}X_{j+}-\alpha_jX_{j+} \\
&=(s_j\beta_j-\alpha_j)X_{j+}-\beta_jX_{j-}\stackrel{(\ref{eq:omega1})}=\gamma_{j}X_{j+}-\beta_{j}X_{j-}, \\
Y_{j}^{-k_{j}}X_{j-}&=(-\beta_jY_j+\gamma_jI)X_{j-}=\gamma_jX_{j-}-\beta_jX_{j+},
\end{align*}
so for $j=1,2$,
\begin{align}
A_j&=(\gamma_{j}X_{j+}-\beta_{j}X_{j-})(\gamma_jX_{j-}-\beta_jX_{j+})  \label{eq:Aj-1}  \\
& =\gamma_{j}^{2}X_{j+}X_{j-}-\gamma_{j}\beta_{j}(X_{j+}^{2}+X_{j-}^{2})+\beta_{j}^{2}X_{j-}X_{j+},  \nonumber \\
& =(\gamma_j^2-\beta_j^2)X_{j+}X_{j-}+t(\beta_j-\gamma_j)\beta_j(X_{j+}+X_{j-})+(2\gamma_j-s_j\beta_j)\beta_jI;  \label{eq:Aj-2}
\end{align}
in the last line, we have used (\ref{eq:X^k}) and (\ref{eq:basic-2}).

Since
\begin{align*}
Y_3^{k_3}X_2^{-1}&=(\beta_3Y_3-\alpha_3I)X_2^{-1}=t\beta_3X_1X_2^{-1}-\beta_3X_1-\alpha_3X_2^{-1}, \\
Y_3^{-k_3}X_2&=(\omega_{-k_3}(s_3)Y_3-\omega_{-k_3-1}(s_3)I)X_2\stackrel{(\ref{eq:omega0})}=(\gamma_3I-\beta_3Y_3)X_2=\gamma_3X_2-\beta_3X_1,
\end{align*}
we have
\begin{align}
A_3&=(t\beta_3X_1X_2^{-1}-\beta_3X_1-\alpha_3X_2^{-1})(\gamma_3X_{2}-\beta_3X_{1})  \nonumber  \\
& =(\alpha_3-\gamma_3)\beta_3X_1X_2+t(\beta_3-\alpha_3)\beta_3(X_1+X_2)+(\alpha_3^2-\beta_3^2)I, \label{eq:A3}
\end{align}
where in the last line, (\ref{eq:X^k})--(\ref{eq:basic-2}) are applied.

The case when $t=0$ turns out to require a special treatment. To this end, we cite the result of \cite{Ch18-1} Section 3, stated in a different form:
\begin{lem} \label{lem:t=0}
Suppose $t=0$ and {\rm(\ref{eq:kappa=delta})} is satisfied. Then {\rm(\ref{eq:A})} holds if and only one of the following cases occurs:
\begin{itemize}
  \item $\gamma_j=\beta_j$, $j=1,2$ and $v_3^{2k_3}=-1$;
  \item $\gamma_j=-\beta_j$, $j=1,2$ and $\beta_3=0$;
  \item $\delta=0$, and there exists $\theta_j\in\mathbb{R}$ with $s_j=2\cos\theta_j$, $j=1,2,3$ and $\cos(2k_1+1)\theta_1=\cos(2k_2+1)\theta_2=\cos(2k_3\theta_3)\ne\pm1$.
\end{itemize}
\end{lem}

\begin{lem}
{\rm(a)} For $j=1,2$, $A_j=I$ if and only if $\gamma_j=\beta_j$ or $X_{j+}=X_{j-}^{-1}$.

{\rm(b)} If $\{X_1,X_2\}$ is regular, then $A_3=I$ if and only if $\beta_3=0$.
\end{lem}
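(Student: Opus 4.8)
The plan is to read $A_j = I$ directly off the explicit expansions (\ref{eq:Aj-2}) and (\ref{eq:A3}), handling the ``if'' and ``only if'' directions separately and, for part (a), splitting the ``only if'' direction according to whether $\{X_{j+}, X_{j-}\}$ is regular. Throughout I use that a regular set cannot contain $\pm I$, so Lemma \ref{lem:non-regular} applies freely.

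For the ``if'' direction of (a): if $X_{j+} = X_{j-}^{-1}$ then $Y_j = X_{j+}^2$ commutes with $X_{j+}$, so $A_j = X_{j+}X_{j-} = I$ at once; if $\gamma_j = \beta_j$, the first two coefficients in (\ref{eq:Aj-2}) vanish, and I evaluate the constant term using (\ref{eq:omega1}), which gives $\alpha_j = (s_j-1)\beta_j$, together with (\ref{eq:omega2}), to get $(2\gamma_j - s_j\beta_j)\beta_j = (2-s_j)\beta_j^2 = 1$, whence $A_j = I$. The ``if'' direction of (b) is analogous: when $\beta_3 = 0$, (\ref{eq:omega2}) forces $\alpha_3^2 = 1$, and (\ref{eq:A3}) collapses to $\alpha_3^2 I = I$.

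For the ``only if'' direction of (b), regularity of $\{X_1,X_2\}$ gives via Lemma \ref{lem:non-regular} that $I, X_1, X_2, X_1X_2$ are linearly independent, so each coefficient in $A_3 - I$ must vanish: $(\alpha_3-\gamma_3)\beta_3 = 0$, $t(\beta_3-\alpha_3)\beta_3 = 0$, and $\alpha_3^2 - \beta_3^2 = 1$. The decisive algebraic point is that the alternative $\alpha_3 = \gamma_3$ is incompatible with the last equation: combining $\alpha_3 = \gamma_3$ with (\ref{eq:omega1}) gives $2\alpha_3 = s_3\beta_3$, and feeding this into (\ref{eq:omega2}) yields $\alpha_3^2 - \beta_3^2 = -1$, a contradiction; hence $\beta_3 = 0$. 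The same mechanism settles the regular case of (a): linear independence of $I, X_{j+}, X_{j-}, X_{j+}X_{j-}$ forces $\gamma_j^2 = \beta_j^2$, so $\gamma_j = \epsilon\beta_j$ with $\epsilon = \pm1$; then (\ref{eq:omega1})--(\ref{eq:omega2}) give $(2-\epsilon s_j)\beta_j^2 = 1$, which turns the vanishing constant-term equation $(2\gamma_j - s_j\beta_j)\beta_j = 1$ into $\epsilon = 1$, so $\gamma_j = \beta_j$.

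The remaining, and most delicate, case is the ``only if'' direction of (a) when $\{X_{j+}, X_{j-}\}$ is not regular. Here Lemma \ref{lem:non-regular} lets me simultaneously conjugate $X_{j+}, X_{j-}$ into $\mathcal{UT}$, and since $A_j = I$ is conjugation-invariant I may assume both are upper triangular, with $(1,1)$-entries $\lambda, \mu \in \{u, u^{-1}\}$. A direct multiplication shows $A_j$ is upper triangular with diagonal $(\lambda\mu, (\lambda\mu)^{-1})$, so $A_j = I$ already forces $\mu = \lambda^{-1}$; relabelling the eigenvalue I may write $X_{j+} = \left(\begin{smallmatrix} u & p \\ 0 & u^{-1}\end{smallmatrix}\right)$ and $X_{j-} = \left(\begin{smallmatrix} u^{-1} & q \\ 0 & u\end{smallmatrix}\right)$, so that $X_{j+} = X_{j-}^{-1}$ is equivalent to $p = -q$. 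Using the closed form $\left(\begin{smallmatrix} a & b \\ 0 & a^{-1}\end{smallmatrix}\right)^{k} = \left(\begin{smallmatrix} a^{k} & b\,\omega_{k}(a+a^{-1}) \\ 0 & a^{-k}\end{smallmatrix}\right)$, valid for all $a$ including $a = \pm1$, I expand the single off-diagonal entry of $A_j$. The computation I aim for shows this entry factors as a scalar multiple of $p + q$, the scalar equalling $(u^{4k_j+2}+1)/(u^2+1)$; then $A_j = I$ forces either $p + q = 0$, i.e. $X_{j+} = X_{j-}^{-1}$, or $u^{4k_j+2} = -1$, which gives $u^{2k_j+2} + u^{-2k_j} = 0$ and hence $\gamma_j = \beta_j$. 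I expect the bookkeeping behind this factorization---organizing the powers of $u$ and recognizing the $\omega_k$-combinations---to be the main obstacle, and I would verify the degenerate subcase $u = \pm1$ (forced when $t = \pm2$) separately, where $\gamma_j \ne \beta_j$ always holds and $X_{j+} = X_{j-}^{-1}$ is the only surviving possibility.
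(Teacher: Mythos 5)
Your ``if'' directions, your part (b), and the regular subcase of part (a) are correct. In fact your contradiction in (b) --- from $\alpha_3=\gamma_3$ and (\ref{eq:omega1}) get $s_3\beta_3=2\alpha_3$, which fed into (\ref{eq:omega2}) gives $\alpha_3^2-\beta_3^2=-1\ne 1$ --- is cleaner than the paper's, which unwinds everything in powers of $v_3$ and has to rule out $v_3=\pm1$. Where you diverge from the paper is part (a): the paper never splits into regular/non-regular cases. It reads $A_j=I$ off the factored form (\ref{eq:Aj-1}): $A_j=I$ iff $\gamma_jX_{j+}-\beta_jX_{j-}$ equals the inverse of $\gamma_jX_{j-}-\beta_jX_{j+}$, and since the latter lies in ${\rm SL}(2,\mathbb{C})$ with trace $t(\gamma_j-\beta_j)$, formula (\ref{eq:X-inverse}) turns this equation into $(\gamma_j-\beta_j)(X_{j+}+X_{j-}-tI)=(\gamma_j-\beta_j)(X_{j+}-X_{j-}^{-1})=0$, which is exactly the assertion --- two lines, no cases, no special values of $u$.

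Your replacement for this, the upper-triangular computation in the non-regular subcase, has a genuine gap: you flag $u=\pm1$ as the only degenerate value, but $u=\pm i$ (i.e.\ $t=0$, $s_j=u^2+u^{-2}=-2$, $v_j=u^2=-1$) is equally degenerate, and it is where your argument actually breaks. There your scalar $(u^{4k_j+2}+1)/(u^2+1)$ is $0/0$: the numerator vanishes automatically since $u^{4k_j+2}=(u^2)^{2k_j+1}=-1$, so the factorization yields no conclusion; the true value of the off-diagonal entry is $(p+q)(2k_j+1)u$, nonzero unless $p+q=0$ (e.g.\ for $k_j=1$, $u=i$ one computes $A_j=\left(\begin{smallmatrix}1 & 3i(p+q)\\ 0 & 1\end{smallmatrix}\right)$). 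Worse, your closing implication ``$u^{4k_j+2}=-1\Rightarrow\gamma_j=\beta_j$'' is false exactly at $u=\pm i$: it rests on the identity $\gamma_j-\beta_j=(u^{2k_j+2}+u^{-2k_j})/(u^2+1)$, whose denominator also vanishes there, and in fact $\gamma_j-\beta_j=(-1)^{k_j}(2k_j+1)\ne0$ when $v_j=-1$. So as written, your dichotomy at $u=\pm i$ would assert that $A_j=I$ regardless of $p,q$ and that $\gamma_j=\beta_j$ --- both wrong. The lemma itself survives at $u=\pm i$ (the direct computation shows $A_j=I$ still forces $p+q=0$, i.e.\ $X_{j+}=X_{j-}^{-1}$), but your proof needs this third case treated separately, exactly as you planned to do for $u=\pm1$; alternatively, adopt the paper's inverse trick and dispense with the case analysis altogether.
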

\begin{proof}
(a) For $j=1,2$, from (\ref{eq:Aj-1}) we see that $A_j=I$ if and only if
$$\gamma_{j}X_{j+}-\beta_{j}X_{j-}=(\gamma_jX_{j-}-\beta_jX_{j+})^{-1}\stackrel{(\ref{eq:X-inverse})}=t(\gamma_j-\beta_j)I-\gamma_jX_{j-}+\beta_jX_{j+},$$
which is equivalent to $(\gamma_j-\beta_j)(X_{j+}-X_{j-}^{-1})=0$.

(b) By (\ref{eq:A3}) and the assumption on $(X_1,X_2)$, $A_3=I$ if and only if
$$(\alpha_3-\gamma_3)\beta_3=t(\alpha_3-\beta_3)\beta_3=\alpha_3^2-\beta_3^2-1=0.$$
Clearly this holds when $\beta_3=0$. Conversely, if $\beta_3\ne 0$, then
$\alpha_3=\gamma_3$ so that $(v_3-v_3^{-1})(v_3^{k_3}+v_3^{-k_3})=0$, and the last equality implies $s_3\alpha_3=2\beta_3$ so that $(v_3-v_3^{-1})(v_3^{k_3-1}+v_3^{1-k_3})=0$; since clearly $v_3\ne\pm1$, this is a contradiction.
\end{proof}

\begin{cor}
If $t\ne 0$ and $\gamma_j=\beta_j$ for some $j\in\{1,2\}$, then {\rm(\ref{eq:A})} holds if and only one of the following cases occurs:
\begin{enumerate}
  \item[\rm(i)] $\gamma_1=\beta_1,\gamma_2\ne\beta_2$, $X_3=X_1^{-1}$, $\beta_3=0$;
  \item[\rm(ii)] $\gamma_1\ne\beta_1,X_2=X_3^{-1}, \gamma_2=\beta_2$, $\beta_3=0$;
  \item[\rm(iii)] $\gamma_1=\beta_1, \gamma_2=\beta_2$, $\{X_1,X_2\}$ is regular, $\beta_3=0$;
  \item[\rm(iv)] $\gamma_1=\beta_1, \gamma_2=\beta_2$, $s_3=2$, $t\in\{\pm2\}$ and $X_1X_2=X_2X_1$.
\end{enumerate}
\end{cor}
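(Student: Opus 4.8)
The plan is to reduce everything to the single equation $A_1=A_2=A_3=I$ and then read off the cases from the preceding Lemma. By hypothesis at least one of $\gamma_1=\beta_1$, $\gamma_2=\beta_2$ holds, so by part (a) of the preceding Lemma the corresponding $A_j$ equals $I$. Since (\ref{eq:A}) asserts $A_1=A_2=A_3$, once one of them is $I$ all three are; conversely $A_1=A_2=A_3=I$ trivially gives (\ref{eq:A}). Hence, under the standing hypothesis, (\ref{eq:A}) is equivalent to $A_1=A_2=A_3=I$, and the problem becomes a matter of characterizing each factor separately.

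For $A_1=I$ and $A_2=I$ I would invoke part (a): $A_1=I$ iff $\gamma_1=\beta_1$ or $X_2=X_3^{-1}$, and $A_2=I$ iff $\gamma_2=\beta_2$ or $X_3=X_1^{-1}$. I then split according to whether exactly one, or both, of $\gamma_1=\beta_1$, $\gamma_2=\beta_2$ hold. If exactly one holds, say $\gamma_1=\beta_1$ but $\gamma_2\ne\beta_2$, then $A_2=I$ forces $X_3=X_1^{-1}$; since $X_1^{-1}$ has the same eigenvectors as $X_1$, the triple $\{X_1,X_2,X_3\}$ being regular is equivalent to $\{X_1,X_2\}$ being regular, so part (b) applies and yields $\beta_3=0$. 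This is case (i), and the mirror argument gives (ii). If both $\gamma_1=\beta_1$ and $\gamma_2=\beta_2$ hold, then $A_1=A_2=I$ automatically and only $A_3=I$ remains; when $\{X_1,X_2\}$ is regular, part (b) again gives $\beta_3=0$, which is case (iii).

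The remaining, and genuinely delicate, situation is $\gamma_1=\beta_1$, $\gamma_2=\beta_2$ with $\{X_1,X_2\}$ not regular, where part (b) no longer applies and one must analyze $A_3=I$ by hand. Here I would use Lemma \ref{lem:non-regular} to conjugate $X_1,X_2$ simultaneously into $\mathcal{UT}$, noting that $A_3=Y_3^{k_3}X_1^{-1}Y_3^{-k_3}X_1$ depends only on the pair $(X_1,X_2)$ and that $A_3=I$ is exactly the condition that $Y_3^{k_3}$ commute with $X_1$, with $Y_3=X_1X_2^{-1}$ now upper triangular. Computing the commutator of the two upper-triangular matrices $Y_3^{k_3}$ and $X_1$ reduces $A_3=I$ to a single scalar equation; feeding in that the full triple $\{X_1,X_2,X_3\}$ must remain regular (so that $X_3$ does not share the common eigenvector of $X_1,X_2$) together with the constraints imposed by $\gamma_1=\beta_1$ and $\gamma_2=\beta_2$, one extracts $s_3=2$, $t\in\{\pm2\}$ and $X_1X_2=X_2X_1$, which is case (iv).

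I expect this last step to be the main obstacle: the hard part is not the commutator computation itself but ruling out the spurious degenerate sub-possibilities (for instance $X_1=X_2$ or $X_2=X_1^{-1}$, each of which makes $A_3=I$ hold automatically for reasons unrelated to $t$), and showing that the simultaneous validity of $\gamma_1=\beta_1$, $\gamma_2=\beta_2$, and regularity of the triple collapses all of them onto the single clean conclusion $t=\pm2$, $s_3=2$, $X_1X_2=X_2X_1$. Care is also needed because $s_3$ is not free: in the upper-triangular normal form it is forced by the eigenvalue pattern of $X_1,X_2$ on their common eigenvector, and the two admissible patterns (equal versus swapped eigenvalues) must be treated and reconciled separately.
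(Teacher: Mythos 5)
Your reduction of (\ref{eq:A}) to $A_1=A_2=A_3=I$ via part (a) of the preceding lemma, and your treatment of cases (i)--(iii), are correct and coincide with the paper's own route: the paper also regards (i)--(iii) as immediate consequences of the lemma and argues only case (iv), by exactly the normal-form device you propose (conjugating the non-regular pair into $\mathcal{UT}$). Your remark that regularity of $\{X_1,X_2,X_3\}$ descends to $\{X_1,X_2\}$ when $X_3=X_1^{\pm1}$ is a detail the paper leaves implicit, and it is genuinely needed to invoke part (b).

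The gap is case (iv): you never carry out the computation, and the completion you are counting on does not exist, because the degenerate branches you yourself identify cannot be ruled out. Do the calculation: if $X_1,X_2\in\mathcal{UT}$ have the same diagonal, say $X_1=\bigl(\begin{smallmatrix}u&a\\0&u^{-1}\end{smallmatrix}\bigr)$, $X_2=\bigl(\begin{smallmatrix}u&b\\0&u^{-1}\end{smallmatrix}\bigr)$, then $Y_3=\bigl(\begin{smallmatrix}1&u(a-b)\\0&1\end{smallmatrix}\bigr)$ and $A_3=I$ is equivalent to the single scalar equation $k_3\,u(a-b)(u-u^{-1})=0$, whose branches are $X_1=X_2$ \emph{or} $t=\pm2$; with opposite diagonals the branches are $X_2=X_1^{-1}$ and $\beta_3=0$. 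The branch $X_1=X_2$ is compatible with all standing hypotheses and with $t\notin\{\pm2\}$: take $k_1=k_2$, let $X_1=X_2$ be diagonal with arbitrary trace $t$, and choose $X_3$ with nonzero off-diagonal entries so that $s_1=s_2={\rm tr}(X_1X_3^{-1})=2\cos\frac{\pi}{2k_1+1}$; then $\gamma_1=\beta_1$, $\gamma_2=\beta_2$, the triple $\{X_1,X_2,X_3\}$ is regular, $A_1=A_2=I$ by part (a), and $A_3=I$ because $Y_3=I$. So (\ref{eq:A}) holds while none of (i)--(iv) does (one has $s_3=2$ and $X_1X_2=X_2X_1$, but $t$ is arbitrary). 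A similar construction with $X_2=X_1^{-1}$ (forcing $t^2=s_1+s_2$, $s_3=t^2-2$) survives for every $k_1,k_2,k_3$. Hence your plan cannot terminate in the statement as written. You should also know that the paper's own proof has precisely this defect: in its normal form the condition extracted from (\ref{eq:A3}) is $k_3(1-x)(u-u^{-1})=0$, and the asserted conclusion ``$u\in\{\pm1\}$, $s_3=2$'' silently discards the root $x=1$ (i.e. $X_2=X_1$) and never treats the opposite-diagonal or doubly-diagonal configurations; so the ``only if'' half of case (iv) is not established there either, and the case list itself appears to be incomplete.
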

\begin{proof}
Only case (iv) needs to be explained: if $\{X_1,X_2\}$ is not regular, then up to conjugacy we may assume
$X_1=\left(\begin{array}{cc} u & 1 \\ 0 & u^{-1} \end{array}\right)$, $X_2=\left(\begin{array}{cc} u^{\epsilon} & x \\ 0 & u^{-\epsilon} \end{array}\right)$ with $\epsilon\in\{\pm1\}$. From (\ref{eq:A3}) we can deduce $u\in\{\pm1\}$, $s_3=2$, and then clearly $X_1X_2=X_2X_1$.
\end{proof}

\begin{lem}
Suppose $t\ne 0$, $\gamma_j\ne \beta_j$ for $j=1,2$, $\beta_3\ne0$, and {\rm(\ref{eq:kappa=delta})} is satisfied. Let $\lambda=\tau/t$. Then {\rm(\ref{eq:A})} holds if and only
\begin{align}
(\lambda-2-s_j)\gamma_j&=(\sigma_1-s_j-\lambda)\beta_j, \qquad j=1,2, \label{eq:equivalence-main1} \\
(\sigma_1+2-2\lambda)\alpha_3&=(s_3^2-s_3\lambda+\sigma_1-2)\beta_3. \label{eq:equivalence-main2}
\end{align}
\end{lem}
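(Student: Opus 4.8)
The plan is to convert the matrix identity (\ref{eq:A}), i.e. $A_1=A_2=A_3$, into scalar equations by writing $A_1,A_2,A_3$ in a common basis of $\mathcal{M}(2,\mathbb{C})$ and comparing coordinates, and then to simplify the result with the $\omega$-recurrences and the constraint (\ref{eq:kappa=delta}). First I would reduce to the case that $\{X_1,X_2\}$ is regular. The hypotheses $\gamma_j\ne\beta_j$ put us outside the scope of the preceding corollary, and if $\{X_1,X_2\}$ were not regular then, by Lemma \ref{lem:non-regular}, $X_1,X_2$ (hence $A_3$, which is built from them via (\ref{eq:A3})) would be simultaneously upper-triangularizable; matching $A_3$ with $A_1$ would then force $X_3$ to share the common eigenvector, contradicting regularity of $\{X_1,X_2,X_3\}$. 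Granting regularity of $\{X_1,X_2\}$, Lemma \ref{lem:non-regular}(iii) makes $\{I,X_1,X_2,X_1X_2\}$ a basis of $\mathcal{M}(2,\mathbb{C})$.

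Next I would express $A_1$ and $A_2$ in this basis. By (\ref{eq:A3}), $A_3$ is already written in it. Setting $X_3=aI+bX_1+cX_2+dX_1X_2$ and taking traces against $I,X_1,X_2,X_1X_2$ yields a $4\times4$ linear system whose coefficients are the known traces $t$, $t^2-s_1,t^2-s_2,t^2-s_3$ and $r$; its invertibility is exactly the regularity of $\{X_1,X_2\}$, so it determines $a,b,c,d$ as explicit functions of $t,s_1,s_2,s_3,r$. The cross-products entering (\ref{eq:Aj-2}) are then reduced by repeated use of (\ref{eq:X^k}), (\ref{eq:X-inverse}) and (\ref{eq:basic-2}) (for example $X_2X_1X_2=-tI+X_1+(t^2-s_3)X_2$), after which $A_1$ and $A_2$ become explicit combinations of $I,X_1,X_2,X_1X_2$.

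Then I would compare the four coordinates in each of $A_1=A_3$ and $A_2=A_3$, producing eight scalar equations in $\alpha_j,\beta_j,\gamma_j$ and $t,s_j,r$. Here the hypotheses are used to cancel common factors: $t\ne0$ clears the factor $t$ in the coefficients of $X_{j+}+X_{j-}$ in (\ref{eq:Aj-2}) and (\ref{eq:A3}), while $\gamma_j\ne\beta_j$ (for $j=1,2$) and $\beta_3\ne0$ clear the factors $\gamma_j-\beta_j$ and $\beta_3$ throughout. Writing $\lambda=\tau/t$ and using (\ref{eq:omega1}) in the form $\gamma_j+\alpha_j=s_j\beta_j$ together with (\ref{eq:omega2}), I expect the eight equations to collapse, after eliminating $a,b,c,d$, to precisely (\ref{eq:equivalence-main1}) for $j=1,2$ and (\ref{eq:equivalence-main2}). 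The converse direction follows by reversing the substitution and checking that these three identities force all eight coordinate equations.

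The main obstacle I anticipate is this middle reduction: the coordinates $a,b,c,d$ are rational in $r$, so substituting them and showing that the surplus coordinate equations follow from the three stated ones is laborious. The key leverage is (\ref{eq:kappa=delta}), which is nothing but the compatibility condition (\ref{eq:r-gamma}) of Lemma \ref{lem:3 matrix}; it must be invoked to eliminate the redundant equations, and pinning down exactly where it enters — and confirming that no additional nondegeneracy (such as $\beta_j\ne0$ for $j=1,2$) is silently required — is the delicate point.
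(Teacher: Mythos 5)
Your overall strategy---turning $A_1=A_2=A_3$ into scalar equations by working in a basis of $\mathcal{M}(2,\mathbb{C})$---is the right kind of idea (the paper does the dual thing, comparing traces against a basis), but your proposal has a genuine gap at its foundation: the reduction to the case that $\{X_1,X_2\}$ is regular. Your justification of that reduction invokes ``matching $A_3$ with $A_1$'', i.e.\ it assumes $A_1=A_3$. That is legitimate at best in the ``only if'' direction (and even there the claim that upper-triangularity of $A_1=A_3$ forces $X_3$ to share the common eigenvector is asserted without proof---$A_1$ depends on $X_3$ quadratically, cf.\ (\ref{eq:Aj-1})). In the ``if'' direction, where you must show that (\ref{eq:equivalence-main1})--(\ref{eq:equivalence-main2}) imply (\ref{eq:A}), the equality $A_1=A_3$ is precisely the conclusion, so the argument is circular; yet regularity of $\{X_1,X_2\}$ is indispensable for your method, since without it $\{I,X_1,X_2,X_1X_2\}$ is not a basis and your coordinates $a,b,c,d$ do not even exist. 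The regularity you want is in fact a nontrivial consequence of the hypotheses: a common eigenvector of $X_1,X_2$ (both of trace $t$) forces $s_3={\rm tr}(X_1X_2^{-1})\in\{2,t^2-2\}$, so what one needs is $s_3\ne 2,t^2-2$, and this is exactly what the paper derives (its (\ref{eq:s3-ne})) from (\ref{eq:equivalence-main1}), (\ref{eq:equivalence-main2}) and (\ref{eq:kappa=delta}), after first showing $\sigma_1+2-2\lambda\ne 0$. That derivation is the very place where (\ref{eq:kappa=delta}) does its work---the point you yourself flag as unresolved.

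Beyond this, the computational core is left as an expectation rather than an argument. The paper's route shows what actually has to be done: the ``only if'' direction is immediate from traces alone, with no basis needed, because ${\rm tr}(A_jX_{j\pm}^{-1})=t$ identically ((\ref{eq:tr-AX+-}), (\ref{eq:tr-AX3+-})), while ${\rm tr}(A_jX_j^{-1})-t$ is $t$ times $(\gamma_j-\beta_j)$, resp.\ $\beta_3$, times the defect in (\ref{eq:equivalence-main1}), resp.\ (\ref{eq:equivalence-main2}) ((\ref{eq:tr-AX}), (\ref{eq:tr-AX3})); for the ``if'' direction one must additionally verify ${\rm tr}(A_1)={\rm tr}(A_2)={\rm tr}(A_3)$ (again using $\kappa=\delta$) and prove that $I,X_1^{-1},X_2^{-1},X_3^{-1}$ (or, in your setup, $I,X_1,X_2,X_1X_2$) form a basis, so that equal traces, equivalently equal coordinates, force equal matrices. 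Your ``I expect the eight equations to collapse'' is precisely this content; until the collapse and the nondegeneracy statements are actually established, the proposal is a plan rather than a proof.
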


\begin{proof}
By (\ref{eq:Aj-2}), for $j=1,2$,
\begin{align}
{\rm tr}(A_{j})& =(\gamma_{j}^{2}-\beta_{j}^{2})(t^2-s_j)+t(\beta_j-\gamma_j)\beta_j\cdot 2t+(2\gamma_{j}-s_j\beta_j)\beta_{j}\cdot 2 \nonumber \\
& =2-(s_j+2-t^2)(\gamma_{j}-\beta_{j})^2, \label{eq:tr-Aj} \\
{\rm tr}(A_jX_{j\pm}^{-1})&=(\gamma_j^2-\beta_j^2)t+t(\beta_j-\gamma_j)\beta_j(s_j+2)+(2\gamma_j-s_j\beta_j)\beta_jt=t, \label{eq:tr-AX+-}
\end{align}
and since ${\rm tr}(X_{j+}X_{j-}X_j^{-1})={\rm tr}(X_{j+}X_{j-}(tI-X_j))=\tau-t(1+s_j)$, we have
\begin{align}
{\rm tr}(A_jX_j^{-1})&=(\gamma_j^2-\beta_j^2)(\tau-t(1+s_j))+t(\beta_j-\gamma_j)(\sigma_1-s_j)+(2\gamma_j-s_j\beta_j)\beta_jt \nonumber \\
&=t(\gamma_j-\beta_j)\big((\lambda-2-s_j)\gamma_j-(\sigma_1-s_{j}-\lambda)\beta_j\big)+t. \label{eq:tr-AX}
\end{align}
By (\ref{eq:A3}),
\begin{align}
{\rm tr}(A_3)&=(\alpha_3-\gamma_3)\beta_3(t^2-s_3)+t(\beta_3-\alpha_3)\beta_3\cdot 2t+(\alpha_3^2-\beta_3^2)\cdot 2 \nonumber \\
& =2+(s_3-2)(s_3+2-t^2)\beta_3^2, \label{eq:tr-A3} \\
{\rm tr}(A_3X_{3\pm}^{-1})&=(\alpha_3-\gamma_3)\beta_3t+t(\beta_3-\alpha_3)\beta_3(2+s_3)+(\alpha_3^2-\beta_3^2)t=t, \label{eq:tr-AX3+-} \\
{\rm tr}(A_3X_3^{-1})&=(\alpha_3-\gamma_3)\beta_3t(\lambda-1-s_3)+t(\beta_3-\alpha_3)\beta_3(\sigma_1-s_3)+(\alpha_3^2-\beta_3^2)t \nonumber \\
& =t((s_3^2-s_3\lambda+\sigma_1-2)\beta_3-(\sigma_1+2-2\lambda)\alpha_3)\beta_3+t.  \label{eq:tr-AX3}
\end{align}

If (\ref{eq:A}) holds, then
\begin{align}
{\rm tr}(A_1)={\rm tr}(A_2)&={\rm tr}(A_3), \label{eq:equivalence1} \\
{\rm tr}(A_1X_j^{-1})={\rm tr}(A_2X_j^{-1})&={\rm tr}(A_3X_j^{-1}), \qquad j=1,2,3. \label{eq:equivalence2}
\end{align}
Due to the assumptions $t\ne 0$, $\gamma_1\ne \beta_1$, $\gamma_2\ne\beta_2$, and $\beta_3\ne0$, from (\ref{eq:tr-AX+-}), (\ref{eq:tr-AX}), (\ref{eq:tr-AX3+-}), (\ref{eq:tr-AX3}) we see that (\ref{eq:equivalence2}) is equivalent to (\ref{eq:equivalence-main1}) and (\ref{eq:equivalence-main2}).

Now suppose (\ref{eq:equivalence-main1}) and (\ref{eq:equivalence-main2}) hold (so that (\ref{eq:equivalence2}) is satisfied).

For $j=1,2$,
\begin{align}
(\sigma_1+2-2\lambda)\gamma_j&=(\sigma_1-s_j-\lambda)(\gamma_j-\beta_j), \label{eq:deduce1} \\
(\sigma_1+2-2\lambda)\beta_j&=(\lambda-2-s_j)(\gamma_j-\beta_j), \label{eq:deduce2}
\end{align}
hence
\begin{align*}
&(\sigma_1+2-2\lambda)^2\stackrel{(\ref{eq:omega2})}=(\sigma_1+2-2\lambda)^2(\gamma_j^2-s_j\gamma_j\beta_j+\beta_j^2)  \\
=\ &\big((\sigma_1-s_j-\lambda)^2-s_j(\sigma_1-s_j-\lambda)(\lambda-2-s_j)+(\lambda-2-s_j)^2\big)\cdot(\gamma_j-\beta_j)^2  \\
=\ &(t^{-2}\kappa(s_j+2)-\delta)(\gamma_j-\beta_j)^2, 
\end{align*}
where in the last line we use $s_j^3-\sigma_1s_j^2+\sigma_2s_j-\sigma_3=0$.
By (\ref{eq:kappa=delta}),
\begin{align*}
(\sigma_1+2-2\lambda)^2=t^{-2}\kappa(s_j+2-t^2)(\gamma_j-\beta_j)^2.  
\end{align*}
If $\sigma_1+2-2\lambda=0$, then (\ref{eq:deduce1}) and (\ref{eq:deduce2}) imply $s_1=s_2=\lambda-2, s_3=2$, so that $\delta=0$ and $\kappa=4t^2$, contradicting (\ref{eq:kappa=delta}) and the assumption $t\ne 0$.
Hence $\sigma_1+2-2\lambda\ne 0$, so that
\begin{align}
s_j\ne t^2-2, \qquad j=1,2. \label{eq:sj-ne}
\end{align}
Consequently,
\begin{align*}
{\rm tr}(A_j)-2=-(s_j+2-t^2)(\gamma_j-\beta_j)^2=-t^2\kappa^{-1}(\sigma_1+2-2\lambda)^2.
\end{align*}

Moreover,
\begin{align*}
1&\stackrel{(\ref{eq:omega2})}=\alpha_3^2-s_3\alpha_3\beta_3+\beta_3^2
\stackrel{(\ref{eq:equivalence-main2})}=\beta_3^2\left(\left(\frac{\sigma_1+2-2\lambda}{\sigma_1+2-2\lambda}\right)^2-s_3\frac{\sigma_1+2-2\lambda}{\sigma_1+2-2\lambda}+1\right) \\
&=\left(\frac{\beta_3}{\sigma_1+2-2\lambda}\right)^2\frac{\kappa}{t^2}(4-2t^2+t^2s_3-s_3^3),
\end{align*}
so that ${\rm tr}(A_3)-2=-t^2\kappa^{-1}(\sigma_1+2-2\lambda)^2$.
This establishes (\ref{eq:equivalence1}), which together with (\ref{eq:tr-A3}), (\ref{eq:tr-Aj}), (\ref{eq:sj-ne}) implies
\begin{align}
s_3\notin\{2,t^2-2\}. \label{eq:s3-ne}
\end{align}

The proof will be complete once $I, X_1^{-1}, X_2^{-1}, X_3^{-1}$ are shown to form a basis of $\mathcal{M}(2,\mathbb{C})$; then (\ref{eq:equivalence1}), (\ref{eq:equivalence2}) will imply that
${\rm tr}(A_1Z)={\rm tr}(A_2Z)={\rm tr}(A_3Z)$ for all $Z\in\mathcal{M}(2,\mathbb{C})$, forcing $A_1=A_2=A_3$.

Assume on the contrary that $I, X_1^{-1}, X_2^{-1}, X_3^{-1}$ are linearly dependent, so are $I, X_1, X_2, X_3$. Note that $I,X_1,X_2$ are linearly independent: otherwise $X_1=X_2$ or $X_1=X_2^{-1}$, which would respectively imply $s_3=2$ or $s_3=t^2-2$, either contradicting (\ref{eq:s3-ne}). So we have $X_3=aI+bX_1+cX_2$ for some $a,b,c$.
Then $t={\rm tr}(X_3)=2a+t(b+c)$, implying
\begin{align*}
\overline{X}_3=b\overline{X}_1+c\overline{X}_2, \qquad \text{with} \qquad \overline{X}_j=X_j-\frac{t}{2}I.  
\end{align*}
The following can be computed using (\ref{eq:basic-2}):
\begin{align*}
\overline{X}_1\overline{X}_2+\overline{X}_2\overline{X}_1&=pI,  \qquad \text{with} \qquad p=\frac{1}{2}t^2-s_3, \\
\overline{X}^2_j&=dI, \qquad \text{with}  \qquad d=\frac{1}{4}t^2-1.
\end{align*}
Consequently, ${\rm tr}(\overline{X}_1\overline{X}_2)=p$, ${\rm tr}(\overline{X}_1^2)={\rm tr}(\overline{X}_2^2)=2d$, and ${\rm tr}(\overline{X}_1^2\overline{X}_2)={\rm tr}(\overline{X}_1\overline{X}_2^2)=0.$
We can rewrite (\ref{eq:Aj-2}) as
\begin{align*}
A_1&=(\gamma_1^2-\beta_1^2)\overline{X}_{2}\overline{X}_{3}+\frac{t}{2}(\gamma_1-\beta_1)^2(\overline{X}_{2}+\overline{X}_{3})+f_1I \\
&=g_1\overline{X}_1\overline{X}_2+\frac{t}{2}(\gamma_1-\beta_1)^2(b\overline{X}_1+(c+1)\overline{X}_2)+h_1I, \\
A_2&=(\gamma_2^2-\beta_2^2)\overline{X}_{3}\overline{X}_{1}+\frac{t}{2}(\gamma_2-\beta_2)^2(\overline{X}_{3}+\overline{X}_{1})+f_2I \\
&=g_2\overline{X}_1\overline{X}_2+\frac{t}{2}(\gamma_2-\beta_2)^2((b+1)\overline{X}_1+c\overline{X}_2)+h_2I,
\end{align*}
and re-write (\ref{eq:A3}) as
\begin{align*}
A_3=g_3\overline{X}_{1}\overline{X}_{2}+\frac{t}{2}(\beta_3^2-\gamma_3\beta_3)(\overline{X}_{1}+\overline{X}_{2})+g_3I,
\end{align*}
where the $f_i$'s, $g_i$'s, $h_i$'s are coefficients that are irrelevant.

By (\ref{eq:equivalence1}), (\ref{eq:equivalence2}), ${\rm tr}(A_1\overline{X}_\ell)={\rm tr}(A_2\overline{X}_\ell)={\rm tr}(A_3\overline{X}_\ell)$, $\ell=1,2$. Hence
\begin{align*}
(\gamma_1-\beta_1)^2(2db+p(c+1))=(\gamma_2-\beta_2)^2(2d(b+1)+pc)&=(\beta_3^2-\gamma_3\beta_3)(2d+p), \\
(\gamma_1-\beta_1)^2(pb+2d(c+1))=(\gamma_2-\beta_2)^2(p(b+1)+2dc)&=(\beta_3^2-\gamma_3\beta_3)(2d+p),
\end{align*}
which lead to
\begin{align*}
2db+p(c+1)&=pb+2d(c+1), \\
2d(b+1)+pc&=p(b+1)+2dc.
\end{align*}
These force $p=2d$, so that $s_3=2$. But this contradicts (\ref{eq:s3-ne}).
\end{proof}

\section{On the A-polynomial}

For background on A-polynomial, see \cite{CCGLS94,LR03}.

For a knot $K\subset S^3$, choose a meridian-longitude pair $(\mathfrak{m},\mathfrak{l})$ of $K$. Let
$$\mathcal{R}_U(K)=\{\rho\in\mathcal{R}(K)\colon\rho(\mathfrak{m}),\rho(\mathfrak{l})\in\mathcal{UT}\},$$
and define $\xi=(\xi_1,\xi_2):\mathcal{R}_U(K)\to\mathbb{C}^2$ by setting $\xi_1(\rho)$ (resp. $\xi_2(\rho)$) to be the upper-left entry of $\rho(\mathfrak{m})$ (resp. $\rho(\mathfrak{l})$). As a known fact, each component of the Zariski closure $\mathcal{V}$ of ${\rm Im}(\xi)$ has dimension 0 or 1. The defining polynomial of the 1-dimensional part of $\mathcal{V}$ is called the {\it A-polynomial} $A_K$. The importance of this two-variable polynomial has at least two aspects. As shown in \cite{CCGLS94}, ``boundary slopes are boundary slopes", meaning that the slope of each side of the Newton polygon of $A_K$ equals the boundary slop of an incompressible surface in $E_K$.
The {\it AJ conjecture}, formulated in \cite{Ga04}, asserts a close relation between the recurrence polynomial of the colored Jones polynomials and $A_K$ (see also \cite{Ge02,Le06}); it is till now still widely open and attracts much attention.

\begin{rmk}
\rm The A-polynomial is notoriously difficult to compute. One reason is that $\rho(\mathfrak{l})$ is usually quite complicated, and no general law has been found to simplify the expression.
\end{rmk}

For the knot at hand, $K=P(2k_1+1,2k_2+1,2k_3)$, take $\mathfrak{m}=x_1$. The corresponding longitude can be found to be
\begin{align*}
\mathfrak{l}=(x_1x_2^{-1})^{-k_3}(x_3^{-1}x_1^{-1})^{k_2}(x_2x_3)^{-k_1-1}(x_1x_2^{-1})^{k_3}(x_2x_3)^{-k_1}(x_3^{-1}x_1^{-1})^{k_2+1}.
\end{align*}

Let $\rho$ be a representation of $\pi_1(E_K)$ as in Section 3. Then
\begin{align*}
\rho(\mathfrak{l})=Y_3^{-k_3}Y_2^{k_2}Y_1^{-k_1-1}Y_3^{k_3}Y_1^{-k_1}Y_2^{k_2+1}.
\end{align*}
Suppose $\rho\in\mathcal{R}_U(K)$, and let $u,w$ denote the upper-left entries of $X_1,\rho(\mathfrak{l})$, respectively, so that ${\rm tr}(\rho(\mathfrak{l}))=w+w^{-1}$.
It is easy to see that
\begin{align}
\rho(\mathfrak{l})=\frac{w-w^{-1}}{u-u^{-1}}X_1+\frac{uw^{-1}-wu^{-1}}{u-u^{-1}}I.  \label{eq:L1}
\end{align}
Let $B_1=Y_3^{-k_3-1}Y_2^{k_2}$, $B_2=Y_1^{-k_1-1}Y_3^{k_3}$, $B_3=Y_1^{-k_1}Y_2^{k_2+1}$, then $B_3=B_2B_1$ and $B_1B_2B_3=Y_3^{-1}\rho(\mathfrak{l})$.
Hence
\begin{align}
{\rm tr}(B_3)&\ =\ {\rm tr}(B_1B_2)={\rm tr}(Y_3^{-1}\rho(\mathfrak{l})B_3^{-1}) \nonumber \\
&\stackrel{(\ref{eq:L1})}=\frac{w-w^{-1}}{u-u^{-1}}{\rm tr}(X_2B_3^{-1})+\frac{uw^{-1}-wu^{-1}}{u-u^{-1}}{\rm tr}(X_2X_1^{-1}B_3^{-1}) \nonumber \\
&\ =\frac{w-w^{-1}}{u-u^{-1}}{\rm tr}(B_3X_2^{-1})+\frac{uw^{-1}-wu^{-1}}{u-u^{-1}}{\rm tr}(B_3);  \label{eq:deduce}
\end{align}
in the last line we have used (\ref{eq:relation1}) to deduce
\begin{align}
B_3X_1=X_2B_3,   \label{eq:B3X1=X2B3}
\end{align}
so that ${\rm tr}(X_2X_1^{-1}B_3^{-1})={\rm tr}(B_3^{-1})={\rm tr}(B_3)$.
Rewrite (\ref{eq:deduce}) as
\begin{align}
(w+1){\rm tr}(B_3X_2^{-1})=(u+u^{-1}w){\rm tr}(B_3).  \label{eq:u-w-0}
\end{align}

Similarly as in \cite{Ch18-2}, we focus on the ``hard" part $\overline{A}_K$ of $A_K$ which is contributed by $\mathcal{X}_3$.
Since
$$B_3=\beta_1\beta_2Y_1-\beta_1\gamma_2Y_3^{-1}-\gamma_1\beta_2I+\gamma_1\gamma_2Y_2,$$
we have
\begin{align*}
{\rm tr}(B_3X_2^{-1})&\ =\ t(\beta_1\beta_2-\beta_1\gamma_2-\gamma_1\beta_2+\gamma_1\gamma_2(s_1+s_2+1-\lambda))  \\
&\ =\ t((\sigma_1-\lambda-s_3)\gamma_1\gamma_2+(\gamma_1-\beta_1)(\gamma_2-\beta_2)) \\
&\stackrel{(\ref{eq:equivalence-main1})}=\frac{t\beta_1\beta_2}{(\lambda-2-s_1)(\lambda-2-s_2)}\Big(\prod\limits_{j=1}^3(\sigma_1-\lambda-s_j)+(\sigma_1+2-2\lambda)^2\Big) \\
&\stackrel{(\ref{eq:kappa=delta})}=\frac{t\beta_1\beta_2}{(\lambda-2-s_1)(\lambda-2-s_2)}(\sigma_1+2-\lambda-t^2)\kappa, \\
{\rm tr}(B_3X_1)&\ =\ t(\beta_1\beta_2(\lambda-1-s_3)-\beta_1\gamma_2-\gamma_1\beta_2+\gamma_1\gamma_2) \\
&\ =\ t((\lambda-2-s_3)\beta_1\beta_2+(\gamma_1-\beta_1)(\gamma_2-\beta_2)) \nonumber \\
&\stackrel{(\ref{eq:equivalence-main1})}=\frac{t\beta_1\beta_2}{(\lambda-2-s_1)(\lambda-2-s_2)}\Big(\prod\limits_{j=1}^3(\lambda-2-s_j)+(\sigma_1+2-2\lambda)^2\Big)  \\
&\stackrel{(\ref{eq:kappa=delta})}=\frac{t\beta_1\beta_2}{(\lambda-2-s_1)(\lambda-2-s_2)}(\lambda-t^2)\kappa,
\end{align*}
and then
\begin{align*}
{\rm tr}(B_3)&\ \stackrel{(\ref{eq:X-inverse})}=\ t^{-1}({\rm tr}(B_3X_1^{-1})+{\rm tr}(B_3X_1)) \\
&\stackrel{(\ref{eq:B3X1=X2B3})}=t^{-1}({\rm tr}(B_3X_2^{-1})+{\rm tr}(B_3X_1))  \\
&\ =\ \frac{\beta_1\beta_2}{(\lambda-2-s_1)(\lambda-2-s_2)}(\sigma_1+2-2t^2)\kappa.
\end{align*}
Thus (\ref{eq:u-w-0}) becomes
\begin{align*}
(w+1)t(\sigma_1+2-\lambda-t^2)=(u+u^{-1}w)(\sigma_1+2-2t^2).
\end{align*}

Then $\overline{A}_K$, as a polynomial in $u,w$, can be obtained by computing the multi-variable resultant of the following (remembering $t=u+u^{-1}$):
\begin{align}
(\lambda-2-s_j)\gamma_j&=(\sigma_1-s_j-\lambda)\beta_j, \qquad j=1,2,  \label{eq:AP1} \\
(\sigma_1+2-2\lambda)\alpha_3&=(s_3^2-s_3\lambda+\sigma_1-2)\beta_3, \label{eq:AP2} \\
t^2(\lambda^2-(\sigma_1+2)\lambda+\sigma_2+4)&=4+\sigma_3+2\sigma_2-\sigma_1^2, \label{eq:AP3}  \\
(w+1)t(\sigma_1+2-\lambda-t^2)&=(u+u^{-1}w)(\sigma_1+2-2t^2). \label{eq:AP4}
\end{align}

\end{document}